\newtheorem{theorem}{Theorem}
\newtheorem{lemma}[theorem]{Lemma}
\newtheorem{proposition} [theorem]{Proposition}
\def\xyma{\xymatrix@M.7em}
\begin{document}
\title{On nontriviality of homotopy groups of spheres}
\author{Sergei O. Ivanov}
\address{Chebyshev Laboratory, St. Petersburg State University, 14th Line, 29b,
Saint Petersburg, 199178 Russia} \email{ivanov.s.o.1986@gmail.com}

\author{Roman Mikhailov}
\address{Chebyshev Laboratory, St. Petersburg State University, 14th Line, 29b,
Saint Petersburg, 199178 Russia and St. Petersburg Department of
Steklov Mathematical Institute} \email{rmikhailov@mail.ru}
\author{Jie Wu }
\address{Department of Mathematics, National University of Singapore, 2 Science Drive 2
Singapore 117542} \email{matwuj@nus.edu.sg}
\urladdr{www.math.nus.edu.sg/\~{}matwujie}

\thanks{The main result (Theorem~\ref{maintheorem}) is supported by the Russian Science Foundation, grant N 14-21-00035.
The last author (Jie Wu) is also partially supported by the
Singapore Ministry of Education research grant (AcRF Tier 1 WBS
No. R-146-000-190-112) and a grant (No. 11329101) of NSFC of
China.}

\begin{abstract}
For $n\geq 2$, the homotopy groups $\pi_n(S^2)$ are non-zero.
\end{abstract}
 \maketitle

\section{Introduction}

In \cite{Curtis}, E. Curtis proved that $\pi_n(S^4)\neq 0,$ for
all $n\geq 4$. The main method from \cite{Curtis} of proving that
a given element of the homotopy groups of spheres is non-zero is
the analysis of Adams' $d$ and $e$-invariants of the stabilization
of either that element or its Hopf image. This method allowed E.
Curtis to prove that (see \cite{Curtis})
$$
\pi_n(S^2)\neq 0,\ n\not\equiv 1 \mod 8
$$
The same results on non-vanishing terms of the homotopy groups of
spheres were obtained with the help of the composition method by
M. Mimura, M.Mori and N. Oda \cite{MMO}.

Using the methods of the stable homotopy theory, the analysis of
the image of the J-homomorphism and K-theory, it was shown by M.
Mahowald \cite{Mahowald,Mahowald2} and M. Mori \cite{Mori} that
$$
\pi_n(S^5)\neq 0,\ n\geq 5.
$$
From the other hand, since the fourth stable homotopy group of
spheres is zero, one can not get such kind of result for higher
spheres, indeed $\pi_{n+4}(S^n)=0,\ n\geq 6$. The only remaining
case to consider when such kind of phenomena can happen is the
case of $S^2$ and $S^3$. The main result of this paper is the
following
\begin{theorem}\label{maintheorem}
For $n\geq 2$, the homotopy groups $\pi_n(S^2)$ are non-zero.
\end{theorem}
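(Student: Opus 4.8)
The plan is to reduce to odd spheres and then to fill in the single congruence class left open by Curtis's $d$- and $e$-invariant method. The Hopf fibration $S^1\to S^3\to S^2$ gives $\pi_n(S^2)\cong\pi_n(S^3)$ for $n\ge 3$, and $\pi_2(S^2)=\mathbb{Z}\neq 0$; so it suffices to prove $\pi_n(S^3)\neq 0$ for all $n\ge 3$, and by Curtis's theorem this is already known for $n\not\equiv 1\pmod 8$. Everything therefore comes down to the case $n\equiv 1\pmod 8$, $n\ge 9$. For these $n$ the $2$-primary part of $\pi_n(S^3)$ can be of no help — already $\pi_9(S^3)=\mathbb{Z}/3$ with $\pi_9(S^3)_{(2)}=0$ — so the argument has to pass through an odd prime.

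The main step I would carry out is a nonvanishing statement for $\pi_n(S^3)_{(3)}$ in the required dimensions, fed by the $3$-local structure theory of $\pi_*(S^3)$: the unstable analogue of the $\alpha$-family, starting from the generator of $\pi_6(S^3)_{(3)}=\mathbb{Z}/3$; the explicit low-degree computations of Serre and Toda (in particular $\pi_9(S^3)_{(3)}=\mathbb{Z}/3$); and the loop-space decompositions and exponent fibrations of Cohen--Moore--Neisendorfer and Selick (in the form worked out at the prime $3$ by Gray--Theriault). Concretely, the strategy is to establish nonvanishing of $\pi_i(S^3)_{(3)}$ for finitely many base dimensions $i$ and then to propagate it by composing with an unstable $v_1$-periodicity operator of period $2(p-1)$, which is $4$ at $p=3$; since $n\equiv 1\pmod 8$ forces $n\equiv 1\pmod 4$, propagating (say) the non-vanishing of $\pi_9(S^3)_{(3)}$ along this operator reaches every dimension $\equiv 1\pmod 4$ beyond $9$, hence every $n\equiv 1\pmod 8$. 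If it turns out that some of these groups genuinely have trivial $3$-part, one quotes the $2$-primary computation there instead (via the image of $J$, the $\mu$-family, or a secondary operation), so the argument may split across the primes $2$ and $3$.

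The hard part is the uniformity — turning ``non-zero in a few low degrees'' into ``non-zero in every dimension of the progression.'' Two points must be nailed down: that one has genuinely produced a periodic family of homotopy elements (the unstable $\alpha$-family in $\pi_*(S^3)_{(3)}$) rather than a list of coincidences, and that these candidates survive — that is, they are neither boundaries in the unstable Adams spectral sequence computing $\pi_*(S^3)_{(3)}$ nor killed upon iterating the periodicity operator. This is intrinsically unstable: the stable analogue is false, since (as noted above) $\pi^s_4=0$ rules out any such statement for every $S^m$ with $m\ge 6$, and it is precisely the EHP mechanism on $S^2$ and $S^3$ that keeps all these groups alive. A cleaner, case-free route — if it can be pushed through — would be to work directly with the $2$-local EHP sequence $\pi_{k+1}(S^3)\xrightarrow{H}\pi_{k+1}(S^5)\xrightarrow{P}\pi_{k-1}(S^2)\xrightarrow{E}\pi_{k}(S^3)$: here $P$ is, up to desuspension, composition with the Whitehead square $[\iota_2,\iota_2]=\pm 2\eta\in\pi_3(S^2)\cong\mathbb{Z}$, and since $\eta_*\colon\pi_*(S^3)\to\pi_*(S^2)$ is an isomorphism in degrees $\ge 3$, the image of $P$ lies in $2\cdot\pi_{k-1}(S^2)$; combined with Mahowald's theorem that $\pi_m(S^5)\neq 0$ for all $m\ge 5$ and with the finiteness of these groups, this constrains the sequence enough that a vanishing $\pi_n(S^2)$ ought to be impossible, the missing ingredient being control of the elements of $\pi_{k+1}(S^5)$ that are not double suspensions.
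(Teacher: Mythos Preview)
Your reduction is correct and matches the paper: via the Hopf fibration one needs $\pi_n(S^3)\neq 0$ for $n\ge 3$, Curtis handles $n\not\equiv 1\pmod 8$, and the remaining dimensions must be filled at an odd prime. You also correctly identify the target: produce nonzero $3$-torsion (or more generally $p$-torsion) in $\pi_{2(p-1)k+1}(S^3)$ for every $k\ge 1$.

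The gap is exactly where you flag it. Your mechanism for ``propagation'' --- composing with an unstable $v_1$-periodicity operator and hoping the result is nonzero --- is not an argument: composition with $\alpha_1$ (or a $v_1$-self-map on a Moore space) does not induce a monomorphism on $\pi_*(S^3)$, so a nonzero seed in low degree does not by itself force nonvanishing in all later degrees. Your proposed fallback to $2$-primary information in the bad degrees is also not available: $\pi_9(S^3)_{(2)}=\pi_{10}(S^3)_{(2)}=0$, and the statement that $\pi_n(S^3)_{(2)}\neq 0$ for all $n>10$ is precisely the open conjecture mentioned in the paper, not a theorem you can quote. The EHP alternative you sketch is, as you say yourself, missing the control of elements of $\pi_*(S^5)$ outside the double-suspension image, so it does not close either.

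What the paper does instead is supply two concrete detection arguments, each covering a residue class of $k$ modulo $p$, whose union is everything. For $k\not\equiv 1\pmod p$ it uses the Cohen--Moore--Neisendorfer fibre sequence $\Omega S^{2p-1}\xrightarrow{\tau}\Omega^2 S^3\langle 3\rangle\to\Omega^2 S^{2p+1}\xrightarrow{\pi} S^{2p-1}$ together with the key property $\Sigma^2\circ\pi_*=p$: if $\alpha_{k-1}(2p-1)$ were in the image of $\pi_*$ then its double suspension, hence its stabilization, would be $p$-divisible, contradicting Adams's computation of the $e$-invariant; thus $\tau_*(\alpha_{k-1}(2p-1))\neq 0$ in $\pi_{2(p-1)k+1}(S^3)$. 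For $k\not\equiv 0\pmod p$ it invokes Toda's element $\gamma'\in\pi_*(Q_2^3)$ with $I(\gamma')=\alpha_{k-1}(4p+1)$ and shows, by an explicit lambda-algebra calculation (a tridiagonal determinant equal to $\pm(k+2)$), that $\alpha_{k-1}(4p+1)$ lies in the image of $H_p\colon\pi_*(S^5)\to\pi_*(S^{4p+1})$ only when $k\equiv 0\pmod p$; hence $p_*(\gamma')\neq 0$ in $\pi_{2(p-1)k+1}(S^3)$ otherwise. These are the two ``survival'' statements your outline needs but does not provide.
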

Since $\pi_n(S^3)=\pi_n(S^2),\ n\geq 3,$ the same result follows
for the homotopy groups $\geq 3$ of the 3-sphere.

In the proof of theorem \ref{maintheorem}, we cover the gaps in
dimensions\ $\equiv 1\mod 8$ by showing that, for any odd prime
$p$ and $n\geq 2$,
$$
\mathbb Z/p\subseteq \pi_{(2p-2)n+1}(S^3),
$$
In particular,
$$
\mathbb Z/3\subseteq \pi_{4n+1}(S^3),\ \ \mathbb Z/15\subseteq
\pi_{8n+1}(S^3).
$$

Let $\pi_{k}^n$ denote the $2$-component of $\pi_k(S^n)$. According to~\cite[the Table on page 543] {Curtis}, the $2$-component $\pi_{k}^4\not=0$ for $k>4$. M.
Mahowald \cite[Theorem 1.6]{Mahowald2} and M. Mori \cite[Corollary 5.12 (iv)]{Mori} also proved the stronger statement that $\pi_{k}^5\not=0$ for $k>5$. For the $2$-component $\pi_k^3$ of $\pi_*(S^3)$, Curtis proved that
$\pi_n^3\neq 0,\ n\not\equiv 1,2 \mod 8.$ The non-triviality of these cases can be also read from the fact that the $2$-local $v_1$-periodic homotopy group $v_1^{-1}\pi_{n}^3\not=0$ if and only if $n\not\equiv 1,2\mod{8}$ by ~\cite[Theorem 4.2]{Davis}. For the remaining cases of $\pi_n(S^3)$ with $n\equiv 1,2\mod 8$, notice that the $2$-components of $\pi_9(S^3)$ and $\pi_{10}(S^3)$ both vanish, and so it is necessary to fulfill odd primes for having the non-triviality. Indicated from~\cite[Figure 3.3.18]{Ravenel}, one could have the following conjecture:

\medskip

\noindent\textbf{Conjecture}\footnote{During the private
circulation of this article, Doug Ravenel wrote a comment that it
could be the case that all other $2$-components of $\pi_*(S^3)$
are nontrivial except $\pi_9(S^3)$ and $\pi_{10}(S^3)$.}. The
$2$-component of $\pi_n(S^3)$ is non-trivial for $n>10$.

\medskip

{\it After writing this paper, the authors became aware of the
result from the paper \cite{Gray}. The gaps in dimensions $\equiv
1 \mod 8$ have been covered by a result of Brayton
Gray~\cite[Theorem 12(e)]{Gray} although Theorem~\ref{maintheorem}
was not aware in~\cite{Gray}. We point out that the method of the
present paper for proving Theorem~\ref{maintheorem} is different
from that in~\cite{Gray}.}

\section{Lambda-algebra and Toda elements}

Recall that, for any $k\geq 1$ and an odd prime $p$, the homotopy
groups $\pi_{2(p-1)k+2}(S^3)$ contain non-trivial elements
$\alpha_k(3)$ called the {\it Toda elements.} The elements
$\alpha_k(3)$ have non-zero stable images in $\pi_{2(p-1)k-1}^S$.
We will use the standard notation
$$
\alpha_k(m)=\Sigma^{m-3}(\alpha_k(3))\in \pi_{2(p-1)k+m-1}(S^m),\
m\geq 3.
$$

There exists a $p$-local EHP sequence
$$
J_{p-1}(S^4)\longrightarrow \Omega S^5\buildrel{H_p}\over
\longrightarrow \Omega S^{2p+1},
$$
where $J_{p-1}(S^4)$ is the $(2p-1)$-skeleton of $\Omega S^5$,
which implies the long exact sequence of homotopy groups~\cite[
(2.11), p.103]{Toda}
$$
\dots\longrightarrow
\pi_{n+1}(S^{4p+1})\buildrel{P}\over\longrightarrow
\pi_{n-1}(J_{p-1}(S^4))\buildrel{E}\over\longrightarrow
\pi_n(S^5)\buildrel{H_p}\over\longrightarrow
\pi_n(S^{4p+1})\longrightarrow \dots.
$$

The following statement seems to be known. For example, there is a
discussion of this result at the end of page 535 in
\cite{Bendersky}. However, we were not able to find an explicit
reference to this statement and give here a proof.

\begin{proposition}\label{lemma1} For $k\geq 2$, if the image of the map
$$
H_p: \pi_{2k(p-1)+4}(S^5)\to \pi_{2k(p-1)+4}(S^{4p+1})
$$
contains the element $\alpha_{k-2}(4p+1)$, then $k\equiv 0\mod p$.
\end{proposition}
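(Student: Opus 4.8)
The plan is to convert the statement into the $p$-local EHP sequence quoted above and then to compute the relevant connecting homomorphism by means of the $\lambda$-algebra. First I would invoke exactness: since $\alpha_{k-2}(4p+1)$ lies in $\pi_{2k(p-1)+4}(S^{4p+1})$, the exact sequence shows that it belongs to the image of $H_p$ if and only if it is killed by the connecting homomorphism $P\colon \pi_{2k(p-1)+4}(S^{4p+1})\to\pi_{2k(p-1)+2}(J_{p-1}(S^4))$. Thus the proposition is equivalent to the assertion that $P(\alpha_{k-2}(4p+1))\neq 0$ whenever $p\nmid k$. (For $k=2$ there is nothing to prove: the group $\pi_{4p}(S^{4p+1})$ vanishes and $\alpha_0$ is not among the Toda elements, so I assume $k\ge 3$.)

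Next I would compute $P$ homologically, on the $E_1$-page of the unstable Adams spectral sequence. The fibration $J_{p-1}(S^4)\to\Omega S^5\to\Omega S^{4p+1}$ is modelled by a short exact sequence of differential $\Lambda$-modules $0\to\Lambda^{J}\to\Lambda(S^5)\to\Lambda(S^{4p+1})\to 0$, in which $\Lambda^{J}$ is spanned by the admissible monomials supported on the James skeleton $J_{p-1}(S^4)$; its algebraic connecting homomorphism realises $P$. I would write down the admissible cycle $z\in\Lambda(S^{4p+1})$ representing $\alpha_{k-2}(4p+1)$, lift it to a chain $\tilde z\in\Lambda(S^5)$, and expand $d\tilde z\in\Lambda^{J}$ using the Adem-type relations for the differential on the generators $\lambda_i$ and $\mu_i$. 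The coefficient of the resulting admissible monomial is a single binomial coefficient, which by Lucas' theorem is congruent modulo $p$ to a unit times $k$; hence the class $P(\alpha_{k-2}(4p+1))$ is detected on the $E_1$-page exactly when $p\nmid k$. Alternatively one may run the same computation through Toda's composition methods and the known behaviour of the $p$-primary Hopf invariant on the $\alpha$-family, where the relevant coefficient is again linear in $k$; I would use whichever bookkeeping turns out cleaner.

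I expect the main obstacle to be precisely this arithmetic bookkeeping: one must keep exact track of which admissible monomials survive in the unstable truncations $\Lambda(S^5)$ and $\Lambda(S^{4p+1})$, fix the precise forms of $\tilde z$ and $d\tilde z$, and isolate the single binomial coefficient and show it is $\equiv\pm k\pmod p$ — this is where the hypothesis on $k$ enters. A second, necessary point is the passage from the spectral sequence back to homotopy: I would need to check that the detecting class survives (no incoming differential) and that the ambient group $\pi_{2k(p-1)+2}(J_{p-1}(S^4))$, which sits just above the top cell of $J_{p-1}(S^4)$, is $\mathbb{Z}/p$, so that non-vanishing of the coefficient modulo $p$ already forces $P(\alpha_{k-2}(4p+1))\neq 0$. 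This ought to follow from the sparseness of the relevant $\mathrm{Ext}$-groups in that range and the structure of the Toda elements recalled above; the $\lambda$-algebra model of the fibration itself I would take as standard input (this is the "known" part, cf.~the discussion in Bendersky).
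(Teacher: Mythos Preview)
Your framework is the paper's: model $H_p$ on the $E^1$-page by the surjection $h_p\colon\Lambda(2)\to\Lambda(2p)$ of unstable lambda algebras (with $h_p(\mu_2\alpha)=\alpha$ and $h_p$ annihilating monomials beginning with $\mu_1,\lambda_1,\lambda_2$), and ask whether the cycle $\mu_1^{k-3}\lambda_1$ representing $\alpha_{k-2}(4p+1)$ lifts to a cycle in $\Lambda(2)$. But your prediction of how the arithmetic goes is wrong, and that is exactly where the content of the proof lives.

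The obvious lift is $\tilde z=\mu_2\mu_1^{k-3}\lambda_1$, and one computes $d\tilde z=-2\lambda_1\mu_1^{k-2}\lambda_1+\mu_1\lambda_1\mu_1^{k-3}\lambda_1$: the coefficients are the constants $-2$ and $1$, no binomial in $k$ appears, and Lucas' theorem plays no role. The dependence on $k$ enters not through Adem-type coefficients but through the \emph{size of a linear system}. In the relevant bidegree the admissible monomials of $\Lambda(2)$ that could contribute are exactly $u_0=\mu_1^{k-2}\lambda_2$ and $u_i=\mu_1^{k-2-i}\mu_2\mu_1^{i-1}\lambda_1$ for $1\le i\le k-2$; the differential maps this $(k-1)$-dimensional space into ${\sf span}(v_0,\dots,v_{k-2})$ with $v_i=\mu_1^{k-2-i}\lambda_1\mu_1^{i}\lambda_1$, and the matrix of $d_1$ in these bases is the tridiagonal matrix with $-2$ on the diagonal and $1$ just off it. Its determinant is $(-1)^{k-1}k$, so a cycle in $\Lambda(2)$ hitting $\mu_1^{k-3}\lambda_1$ under $h_p$ exists iff $p\mid k$. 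That determinant, not a single binomial coefficient, is the computation you must be prepared for.

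Your second worry is also misplaced. The claim that $\pi_{2k(p-1)+2}(J_{p-1}(S^4))\cong\Z/p$ is not plausible in general (this degree is far above the top cell $4(p-1)$ of $J_{p-1}(S^4)$, not ``just above'' it), and no such identification is needed. The paper argues via $h_p$ rather than via $P$: since $\alpha_{k-2}(4p+1)$ is detected by the permanent cycle $\mu_1^{k-3}\lambda_1$, any $H_p$-preimage would have to be detected by a class in the spectral sequence for $S^5$ whose $h_p$-image is that cycle; when $p\nmid k$ no such class survives even to $E^2$, and the conclusion follows with no analysis of higher differentials or of $\pi_*(J_{p-1}(S^4))$.
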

 Let $p$ be a fixed odd prime number. The
mod-$p$ lambda algebra ${}_{[p]}\Lambda=\Lambda$ is an $\mathbb
F_p$-algebra generated by elements $\lambda_i$ of degree
$2(p-1)i-1$ for $i\geq 1$ and elements $\mu_j$ of degree $2(p-1)j$
for $j\geq 0.$ We will use the following notations for ${\tt
a}(k,j), {\tt b}(k,j)\in \mathbb F_p$ \begin{align*} & {\tt
a}(k,j)=(-1)^{j+1}\binom{(p-1)(k-j)-1}{j},\\ & {\tt
b}(k,j)=(-1)^j\binom{(p-1)(k-j)}{j},\end{align*} and for for
$N(k),N'(k)\in \mathbb Z$:
$$N(k)=\left\lfloor k-\frac{k+1}{p}\right\rfloor,\ N'(k)=\left\lfloor k-\frac{k}{p}\right\rfloor$$
The ideal of relations in $\Lambda$ is generated by the following
relations:
\begin{align*}
&  \lambda_i\lambda_{pi+k}=\sum_{j=0}^{N(k)} {\tt a}(k,j)
\lambda_{i+k-j}\lambda_{pi+j},\ i\geq 1,k\geq 0\\
&  \lambda_i\mu_{pi+k}=\sum_{j=0}^{N(k)} {\tt a}(k,j)
\lambda_{i+k-j}\mu_{pi+j}+\sum_{j=0}^{N'(k)} {\tt b}(k,j)
\mu_{i+k-j}\lambda_{pi+j},\ i\geq 1,k\geq 0\\
& \mu_i\lambda_{pi+k+1}=\sum_{j=0}^{N(k)} {\tt a}(k,j)
\mu_{i+k-j}\lambda_{pi+j+1},\ i\geq 0,k\geq 0\\
& \mu_i\mu_{pi+k+1}=\sum_{j=0}^{N(k)} {\tt a}(k,j)
\mu_{i+k-j}\mu_{pi+j+1},\ i\geq 0,k\geq 0.
\end{align*}
The differential $\partial:\Lambda\to \Lambda$ is given by
\begin{align*}
& \partial\lambda_k =\sum_{j=1}^{N(k)}{\tt
a}(k,j)\lambda_{k-j}\lambda_j,\\ & \partial
\mu_k=\sum_{j=0}^{N(k)} {\tt
a}(k,j)\lambda_{k-j}\mu_j+\sum_{j=1}^{N'(k)}{\tt b}(k,j)
\mu_{k-j}\lambda_{j}.\end{align*}

Further by $\nu_i$ we denote an element of $\{\lambda_i,\mu_i\}.$
A monomial $\nu_{i_1}\dots \nu_{i_l}$ is said to be {\it
admissible} if $i_{k+1}\leq pi_k-1$  whenever
$\nu_{i_k}=\lambda_{i_k}$ and if $i_{k+1}\leq pi_k$ whenever
$\nu_{i_k}=\mu_{i_k}.$ The set of admissible monomials is a basis
of $\Lambda$. The {\it unstable lambda algebra} $\Lambda(n)$ is a
dg-subalgebra of $\Lambda$ generated by admissible elements
$\nu_{i_1}\dots \nu_{i_l}$ such that $i_1\leq n.$ We denote by
$\Lambda(n)_m$ the subspace generated by monomials of degree $m$
in $\Lambda(n)$ and by $\Lambda(n)_{m,l}$ the vector space
generated by monomials of length $l$ in $\Lambda(n)_m.$ Then
$$\Lambda(n)=\bigoplus_{m,l}\Lambda(n)_{m,l}, \hspace{1cm} \Lambda(n)_m=\bigoplus_{l} \Lambda(n)_{m,l}.$$

Consider the left ideal of $\Lambda:$
\begin{equation}
\Lambda\lambda=\sum_i \Lambda \lambda_i.
\end{equation}
The set of all admissible monomials $\nu_{i_1}\dots \nu_{i_l}$
such that $\nu_{i_l}=\lambda_{i_l}$ forms a basis of
$\Lambda\lambda.$ Further we put
$$\Lambda\lambda(n)=\Lambda\lambda\cap \Lambda(n).$$
There exists a spectral sequence which converges to the
$p$-primary components of the homotopy groups of spheres, whose
$E^1$-page is the lambda-algebra and $d^1$-differential is the
differential in the lambda algebra:
$$
E^1(n)=\Lambda\lambda(n)\Rightarrow\ _{(p)}\pi_*(S^{2n+1}).
$$
This is an integral version of the well-known lower central series
spectral sequence of six authors \cite{6_authors}. This spectral
sequence was considered in details in the thesis of D. Leibowitz
\cite{Leibowitz}.

In the language of lambda-algebra, the elements $\alpha_k$ can be
presented as (see, for example 2.9 \cite{Tangora})
$\mu_1^{k-1}\lambda_1$. The map $H_p: \Omega S^5\to \Omega
S^{4p+1}$ induces the map $h_p$ on the level of $E^1$-terms of the
spectral sequence (see page 23 in \cite{Tangora}, and also
\cite{Thompson} and \cite{HM}) with the short exact sequence
$$
0\to \Lambda(1)\oplus \lambda_2\Lambda(5)\longrightarrow
\Lambda(2)\buildrel{h_p}\over\longrightarrow \Lambda(2p)\to 0,
$$
$$
h_p(\mu_1\alpha)=h_p(\lambda_1\alpha)=h_p(\lambda_2\alpha)=0,
$$
for any $\alpha$ and
$$
h_p(\mu_2\alpha)=\alpha\in \Lambda(4p+1).
$$

\begin{lemma}
The linear map
$$d_1:{\sf span}(\mu_1^k\lambda_2,\{\mu_1^{k-i}\mu_2\mu_1^{i-1}\lambda_1\}_{i=1}^k)\longrightarrow {\sf span}(\{\mu_1^{k-i}\lambda_1\mu_1^i\lambda_1\}_{i=0}^k)$$
is an isomorphism if and only if $k+2 \not\equiv 0 ({\rm mod}\:
p).$
\end{lemma}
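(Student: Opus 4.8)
The plan is to compute the differential $d_1$ explicitly on the given spanning sets using the formulas for $\partial\lambda_k$ and $\partial\mu_k$ from the $\lambda$-algebra, reduce the question to the invertibility of an explicit $(k+1)\times(k+1)$ matrix over $\mathbb F_p$, and identify when its determinant vanishes. First I would record the relevant low-degree values of the structure constants: since $\partial\lambda_1=0$, $\partial\mu_1 = \texttt{a}(1,1)\lambda_0\mu_1 + \texttt{b}(1,1)\mu_0\lambda_1$ — but here the only indices appearing are $1$ and $2$, so what is actually needed is $\partial\lambda_2$, $\partial\mu_2$, and the Leibniz rule $\partial(\nu_i\alpha) = (\partial\nu_i)\alpha \pm \nu_i(\partial\alpha)$. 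Because $\partial\mu_1$ contributes only terms with a $\lambda_1$ or $\mu_1$ in a position that, after applying the relations, stays inside $\mathrm{span}(\{\mu_1^{k-i}\lambda_1\mu_1^i\lambda_1\})$ or is killed, the computation collapses to a manageable linear-algebra problem. I would apply $d_1$ to each basis element $\mu_1^k\lambda_2$ and $\mu_1^{k-i}\mu_2\mu_1^{i-1}\lambda_1$, using the $\lambda$-algebra relations to rewrite every resulting monomial in terms of the admissible basis $\{\mu_1^{k-i}\lambda_1\mu_1^i\lambda_1\}_{i=0}^k$ of the target.

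The key step is the relation $\mu_1\mu_2 = \sum_j \texttt{a}(0,j)\mu_{1-j}\mu_{2+j}$-type rewriting and, more importantly, the relations that turn $\mu_1^{k-i}\mu_2\mu_1^{i-1}\lambda_1$ (after applying $\partial$ to the $\mu_2$ and to the interior $\mu_1$'s) into combinations of $\mu_1^{k-i}\lambda_1\mu_1^i\lambda_1$. The upshot should be that, in the ordered bases as listed, the matrix of $d_1$ is (after possibly reordering) lower- or upper-triangular plus a rank-one correction, or more precisely a bidiagonal-type matrix whose determinant is, up to a unit, a single binomial coefficient — I expect exactly $\binom{p-1}{1}$-style factors assembling into something proportional to $(k+2)$ modulo $p$. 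Concretely, the coefficient $\texttt{a}(k,1) = (-1)^{2}\binom{(p-1)(k-1)-1}{1} = (p-1)(k-1)-1 \equiv -k - (p-1) \equiv -(k+1)$ and $\texttt{b}(k,1)=(-1)\binom{(p-1)(k-1)}{1} = -(p-1)(k-1)\equiv (k-1)$ type expressions modulo $p$ are the arithmetic inputs, and tracking the single "defect" entry of the matrix gives a determinant divisible by $p$ precisely when $k+2\equiv 0\pmod p$.

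After assembling the matrix $M$ of $d_1$ with respect to the two ordered bases (so $M$ is square of size $k+1$), I would compute $\det M$ by expanding along the row or column corresponding to $\mu_1^k\lambda_2$, exploiting that $\partial\mu_1^k\lambda_2$ has a particularly simple expansion (only $\partial\lambda_2$ and the $k$ copies of $\partial\mu_1$ contribute). The remaining minor should be triangular with unit diagonal entries coming from the $\texttt{b}(1,1)$-type terms in $\partial(\mu_1^{k-i}\mu_2\mu_1^{i-1}\lambda_1)$ that produce the "diagonal" monomial $\mu_1^{k-i}\lambda_1\mu_1^{i}\lambda_1$ with an invertible coefficient, while the off-diagonal entries are strictly triangular. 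This reduces $\det M$ to $\pm(\text{unit})\cdot c$ where $c\in\mathbb F_p$ is an explicit linear expression in $k$, and verifying $c = (\text{unit})\cdot(k+2)$ in $\mathbb F_p$ finishes the proof.

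The main obstacle I anticipate is the bookkeeping in the rewriting step: applying $\partial$ to a word like $\mu_1^{k-i}\mu_2\mu_1^{i-1}\lambda_1$ produces non-admissible monomials at several positions, and each must be pushed to admissible form using the four families of relations, which can cascade. The danger is that a term one expects to be killed or to land off-diagonal actually contributes to the diagonal, changing $c$. I would control this by checking, position by position, that the only relations triggered are those with small $k$-parameter (namely $k=0$ or $k=1$ in the relation formulas, since all indices here are $1$ or $2$), for which $N(k)$ and $N'(k)$ are $0$ or $1$ and the structure constants are the explicit small binomials written above; this keeps every rewriting a one- or two-term substitution and makes the triangularity transparent.
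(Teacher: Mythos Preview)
Your overall plan --- compute $d_1$ on the given basis, form the $(k+1)\times(k+1)$ matrix, and test its determinant modulo $p$ --- is exactly what the paper does. But two concrete expectations in your outline are wrong and would derail the computation.

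First, the parameter $k$ of the lemma does \emph{not} enter through the structure constants $\texttt{a}(k,j),\texttt{b}(k,j)$. The only differentials you ever need are $d(\lambda_1)=0$, $d(\mu_1)=-\lambda_1\mu_0$, $d(\lambda_2)=-2\lambda_1^2$, and $d(\mu_2)=-\lambda_2\mu_0-2\lambda_1\mu_1+\mu_1\lambda_1$; these are fixed once and for all, and the relations you need to push to admissible form are just $\mu_0\mu_1=\mu_0\lambda_1=0$, $\mu_0\lambda_2=-\mu_1\lambda_1$, $\mu_0\mu_2=-\mu_1^2$. (In particular your expression for $\partial\mu_1$ is off: there is no $\lambda_0$ in the algebra.) The integer $k$ from the lemma appears only as the \emph{size} of the matrix, never in its entries. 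So the passage where you compute $\texttt{a}(k,1)\equiv -(k+1)$ etc.\ is beside the point.

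Second, and more seriously, the matrix is not triangular, nor triangular-plus-rank-one, nor bidiagonal. With $u_0=\mu_1^k\lambda_2$, $u_i=\mu_1^{k-i}\mu_2\mu_1^{i-1}\lambda_1$ and $v_i=\mu_1^{k-i}\lambda_1\mu_1^i\lambda_1$, one finds
\[
d(u_0)=-2v_0+v_1,\qquad d(u_i)=v_{i-1}-2v_i+v_{i+1}\ (1\le i\le k-1),\qquad d(u_k)=v_{k-1}-2v_k,
\]
so the matrix is the genuinely \emph{tridiagonal} ``discrete Laplacian'' with $-2$ on the diagonal and $1$ on both off-diagonals. Expanding along the first row does not leave a triangular minor; instead one uses the two-term recursion $D_{n}=-2D_{n-1}-D_{n-2}$ for the principal minors, which gives $\det=(-1)^{k+1}(k+2)$. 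That is where the factor $k+2$ actually comes from, and hence the condition $k+2\not\equiv 0\pmod p$.
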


\begin{proof}
Using the definition of $d_1=:d$ we get $$d(\lambda_1)=0, \  \
d(\mu_1)=-\lambda_1\mu_0, \  \ d(\lambda_2)=-2\lambda_1^2, \  \
d(\mu_2)=-\lambda_2\mu_0-2\lambda_1\mu_1+\mu_1\lambda_1.$$Using
the relations $\mu_0\mu_1=0= \mu_0\lambda_1$ and
$\mu_0\lambda_2=-\mu_1\lambda_1,$ $\mu_0\mu_2=-\mu_1\mu_1,$ it is
easy to compute that \begin{align*} &
d(\mu_2\lambda_1)=-2\lambda_1\mu_1\lambda_1+\mu_1\lambda_1^2,\\
& d(\mu_1\lambda_2)=\lambda_1\mu_1\lambda_1-2\mu_1\lambda_1^2,\\
&
d(\mu_1\mu_2\lambda_1)=\lambda_1\mu_1^2\lambda_1-2\mu_1\lambda_1\mu_1\lambda_1+\mu_1^2\lambda_1^2.
\end{align*} Moreover, we obtain $d(\mu_1)\mu_1=0$ and
$d(\mu_1)\lambda_1=0.$ It follows that
\begin{align*}
&
d(\mu_1^k\lambda_2)=\mu_1^{k-1}d(\mu_1\lambda_2)=\mu_1^{k-1}\lambda_1\mu_1\lambda_1-2\mu_1^k\lambda_1^2,\\
&
d(\mu_1^{k-1}\mu_2\lambda_1)=\mu_1^{k-2}d(\mu_1\mu_2\lambda_1)=\mu_1^{k-2}\lambda_1\mu_1^2\lambda_1-2\mu_1^{k-1}\lambda_1\mu_1\lambda_1+\mu_1^k\lambda_1^2.\\
&
d(\mu_1^{k-i-1}\mu_2\mu_1^i\lambda_1)=\mu_1^{k-i-2}d(\mu_1)\mu_2\mu_1^i\lambda_1+\mu_1^{k-i-1}d(\mu_2)\mu_1^i\lambda_1=\\
& \ \ \ \
=\mu_1^{k-i-2}\lambda_1\mu_1^{i+2}\lambda_1-2\mu_1^{k-i-1}\lambda_1\mu_1^{i+1}\lambda_1+\mu_1^{k-i}\lambda_1\mu_1^{i}\lambda_1
\end{align*}
for $1\leq i\leq k-2$ and
$$d(\mu_2\mu_1^{k-1}\lambda_1)=d(\mu_2)\mu_1^{k-1}\lambda_1=-2\lambda_1\mu_1^{k}\lambda_1+\mu_1\lambda_1\mu_1^{k-1}\lambda_1.$$

If we denote $v_i:=\mu_1^{k-i}\lambda_1\mu_1^i\lambda_1$ for
$0\leq i\leq k,$ and $u_i=\mu_1^{k-i}\mu_2\mu_1^{i-1}\lambda_1$
for $1\leq i\leq k$ and $u_0=\mu_1^k\lambda_2,$ then
$$d(u_0)=v_1-2v_0, \ \ d(u_i)=v_{i+1}-2v_{i}+v_{i-1}, \ \ d(u_{k})=-2v_k+v_{k-1}.$$
The matrix corresponding to this linear map is the following
matrix
$$\left(\begin{array}{rrrrrr}
-2 & 1 & 0 & \ 0&  \dots & \ 0\\
1 & -2 & 1& 0 & \dots & 0\\
0 & 1 & -2 & 1 & \ddots & 0\\
\vdots &\ddots &\ddots & \ddots&\ddots & \vdots\\
0 & \dots  & 0 & 0 & 1 & -2\\
\end{array}\right).$$
It is easy to check by induction that its determinant is equal to
$(-1)^{k+1}(k+2).$ It follows that $d:{\sf span}(u_0,\dots,u_k)\to
{\sf span}(v_0,\dots,v_k)$ is an isomorphism  if and only if $ k+2
\not\equiv 0 ({\rm mod}\: p).$
\end{proof}

Now we are ready to prove proposition \ref{lemma1}.
\begin{proof}[Proof of proposition \ref{lemma1}]
Indeed, if the Toda element $\alpha_{k-2}=\mu_1^{k-3}\lambda_1$
lies in the $H_p$ image, then there must be some term on
$E^2$-page like $C\mu_2\mu_1^{k-3}\lambda_1+\sum\dots,\
C\not\equiv 0\mod p,$ which maps onto $\alpha_{k-2}$ by $h_p$.
However, by lemma 2, this is possible only in the case $k\equiv
0\mod p$, in all other cases the corresponding $E^2$-term of the
spectral sequence for $S^5$ is zero.\end{proof}

\section{Proof of theorem \ref{maintheorem}}
For the proof of theorem \ref{maintheorem}, we will use the
following classical results in homotopy theory.

\vspace{0.5cm}\noindent (1)~\cite{Adams} or~\cite[(4.3),
p.112]{Toda}. The element $\alpha_k\in \pi_{2(p-1)k-1}^S$ is not
divisible by $p$ for $k\not\equiv 0\mod p$.

\vspace{0.5cm}\noindent (2). Let $p>2$. By the classical work of
Cohen, Moore and Neisendorfer~\cite{CMN},  there exists a map
$\pi\colon \Omega^2S^{2n+1}\to S^{2n-1}$ such that the composite
$$
\Omega^2S^{2n+1}\buildrel{\pi}\over \longrightarrow  S^{2n-1}\buildrel{\Sigma^2}\over\longrightarrow  \Omega^2S^{2n+1}
$$
is homotopic to the $p$-th power map $p\colon \Omega^2S^{2n+1}\to \Omega^2S^{2n+1}$, where the case $p=3$ is given in~\cite[Theorem 4.1]{Neisendorfer}. Following the notation in~\cite{CMN}, let $D(n)$ be the homotopy fibre of $\pi\colon \Omega^2S^{2n+1}\to S^{2n-1}$. According to~\cite[Section 6]{CMN}, $D( p)\simeq \Omega^2S^3\langle 3\rangle$ and so there is a fibre sequence
$$
\Omega S^{2p-1} \buildrel{\tau} \over\longrightarrow \Omega^2S^3\langle 3\rangle\buildrel{\theta}\over \longrightarrow  \Omega^2 S^{2p+1}\buildrel{\pi}\over \longrightarrow S^{2p-1}
$$
that implies a long exact sequence
$$
\dots\longrightarrow \pi_{n+1}(S^{2p+1})\buildrel{\pi_*}\over\longrightarrow  \pi_{n-1}(S^{2p-1})\buildrel{\tau_*}\over\longrightarrow
\pi_n(S^3)\buildrel{\theta_*}\over\longrightarrow  \pi_n(S^{2p+1})\longrightarrow \dots.
$$
with the property that, for every $i$, the composition
$$
\pi_{i+2}(S^{2p+1})\buildrel{\pi_*}\over\longrightarrow\pi_i(S^{2p-1})\buildrel{\Sigma^2}\over\longrightarrow\pi_{i+2}(S^{2p+1})
$$
is the multiplication by $p$.

\vspace{.5cm}\noindent (3)~\cite[(2.12), p. 104]{Toda}. For $m\geq
2,$ denote by $Q_2^{2m-1}$, the homotopy fibre of the double
suspension map $S^{2m-1}\to \Omega^2S^{2m+1}$. We will use the
notation from \cite{Toda}. The natural map $Q_2^{2m-1}\to
S^{2m-1}$ induces the map on homotopy groups $p_*$. There is a
natural map
$$
I: \pi_i(Q_2^{2m-1})\to \pi_{i+3}(S^{2mp+1}),
$$
such that the composition
$$
\pi_{i+3}(S^{2m+1})\to
\pi_i(Q_2^{2m-1})\buildrel{I}\over\longrightarrow
\pi_{i+3}(S^{2mp+1})
$$
is the Hopf map $H_p$.

\vspace{.5cm} For a given $k\not\equiv 1\mod p$, consider the
element
$$\alpha_{k-1}\in
\pi_{2(p-1)(k-1)+2p-1}(S^{2p-1})=\pi_{2(p-1)k}(S^{2p-1}).$$
Suppose that $\alpha_{k-1}(2p-1)\in im\{\pi_*:
\pi_{2(p-1)k+2}(S^{2p+1})\to \pi_{2(p-1)k}(S^{2p-1})\}.$ Then the
element $\Sigma^2\alpha_{k-1}(2p-1)=\alpha_{k-1}(2p+1)$ is
$p$-divisible by (2), hence its stable image is $p$-divisible. But
this is not possible by (1). We conclude that
$$\tau_*(\alpha_{k-1}(2p-1))\not=0$$
by the long exact sequence in (2),  and so
 $$(A)\ \ \ \ \ \ \ \
\ \ \ \ \ \  \mathbb Z/p\subseteq \pi_{2(p-1)k+1}(S^3),\
k\not\equiv 1 \mod p$$

Now we recall the following statement of Toda (Theorem 5.2 (ii)
\cite{Toda}, case $m=1$). For $k\geq 2$, there exist an element
$$
\gamma'\in \pi_{2p+2k(p-1)-1}(Q_2^3)=\pi_{2(p-1)(k+1)+1}(Q_2^3)
$$
such that
$$
I(\gamma')=\alpha_{k-1}(4p+1)\in \pi_{4p+2(k-1)(p-1)}(S^{4p+1}).
$$
Here $I: \pi_{2p+2k(p-1)-1}(Q_2^3)\to
\pi_{2p+2k(p-1)+2}(S^{4p+1}).$ Suppose that $p_*(\gamma')=0$, then
$$
\gamma'\in im\{H^{(2)}: \pi_{2(p-1)(k+1)+4}(S^5) \to
\pi_{2(p-1)(k+1)+1}(Q_2^3).\}
$$
In this case, we get
$$
\alpha_{k-1}(4p+1)\in im\{H_p: \pi_{2(k+1)(p-1)+4}(S^5)\to
\pi_{2(k+1)(p-1)+4}(S^{4p+1})\}
$$
This is possible only for $k+1\equiv 0\mod p$ by proposition
\ref{lemma1}. For $k+1\not\equiv 0\mod p$, we get $0\neq
p_*(\gamma')\in\pi_{2(p-1)(k+1)+1}(S^3)$. Therefore,
$$
(B)\ \ \ \ \ \ \ \ \ \ \ \ \ \ \mathbb Z/p\subseteq
\pi_{2(p-1)k+1}(S^3),\ k\not\equiv 0\mod p.
$$
The statements $(A)$ and $(B)$ together give the needed statement:
$$
\mathbb Z/p\subseteq \pi_{2(p-1)k+1}(S^3),\ k\geq 1.
$$
Theorem \ref{maintheorem} now follows, since all dimensions
$\equiv 1\mod 8$ are covered, moreover there is a $\mathbb
Z/15$-summand in homotopy groups $\pi_{8l+1}(S^2),\ l\geq 2$.\ \ \
$\Box$

\vspace{.5cm} As a final remark we observe that homotopy groups of
$S^2$ in certain dimension $\equiv 1 \mod 8$ can be covered in
another way. For that, we recall the results from \cite{Toda} and
\cite{Mori}.

\vspace{.5cm}\noindent (iv) (Lemma 15.3 (i),\cite{Toda3}) Let
$y\in \pi_i(S^{2p-1})$ be an element of order $p$. There exists an
element $a\in \pi_{i+2}(S^3)$, such that
$$
H_p(a)=x\Sigma^2y\in \pi_{i+2}(S^{2p+1})
$$
for some $x\not\equiv 0\mod p$.

\vspace{.5cm}\noindent (v) For $f\geq 0$, there is a family of
elements $\alpha_i^{(f)}\in \pi_{2i(p-1)p^f+2f+2}(S^{2f+3})$ of
order $p^f$, which have non-zero stable image in
$\pi_{2i(p-1)p^f-1}^S$. The $e$-invariants of these elements are
the following: $e_C(\alpha_i^{(f)})=-p^{-f-1}$.

\vspace{.5cm}\noindent (vi) (Lemma 4.1, \cite{Mori}) Let $f,g\geq
0,\ i,j\geq 1$ and
\begin{align*}
& \alpha: S^{2n+2i(p-1)p^f-1}\to S^{2n},\\
& \beta: S^{2n+2i(p-1)p^f+2j(p-1)p^g-2}\to S^{2n+2i(p-1)p^f-1}.
\end{align*}
Assume that $e_C(\alpha)e_C(\beta)=p^{-u}$ and
\begin{align*}
& \nu_p(j)+g+1<u\leq \nu_p(i)+f+1+i(p-1)p^f,\\
& u+\nu_p(ip^f+jp^g)-\nu_p(i)-f-i(p-1)p^f\leq
n<u+\nu_p(ip^f+jp^g)-\nu_p(j)-g,
\end{align*}
then $\alpha\circ \beta$ non zero.

\vspace{.5cm} Now we will show that, for any $k\geq 1$, there is a
non-zero $p$-torsion element in $\pi_{2(p-1)(p^pk+1)+1}(S^3)$. For
that, consider the case $g=0, f=p-2, i=p^2k-1, j=p^{p-2}$. By
(vi), we see that, $\alpha_i^{(p-2)}\circ \alpha_j^{(0)}$ is a
non-zero element in homotopy group
$\pi_{2(p-1)p^pk+2p-1}(S^{2p+1})$ which equals to the image of the
double suspension of an element of order $p$ from
$\pi_{2(p-1)p^pk+2p-3}(S^{2p-1})$. Hence, by (iv), there is an
element in
$\pi_{2(p-1)p^pk+2p-1}(S^3)=\pi_{2(p-1)(p^pk+1)+1}(S^3)$ whose
$H_p$-image gives a non-zero multiple of this element.

 \vspace{.5cm}\noindent {\it
Acknowledgements.} The authors would like to thank F. Petrov for discussions
related to the subject of the paper. The authors also wish to thank Martin Bendersky, Fred Cohen, Don Davis, Brayton Gray, Haynes Miller and Doug Ravenel most warmly for their important comments during the private circulation of this article.

\end{document}